\def\xto#1{\xrightarrow[]{#1}}
\def\I{\mathbb{I}}
\def\q{{\frak q}}
\def\p{{\frak p}}
\def \hom{\mathop{\sf Hom}\nolimits}
\def\1{^{-1}}
\newtheorem{De}{Definition}[section]
\newtheorem{Th}[De]{Theorem}
\newtheorem{Pro}[De]{Proposition}
\newtheorem{Le}[De]{Lemma}
\newtheorem{Co}[De]{Corollary}
\newtheorem{Rem}[De]{Remark}
\begin{document}

\title{On the spectrum of  monoids and semilattices}

\author[I. Pirashvili]{Ilia  Pirashvili}
\email{ilia\_p@ymail.com}

\maketitle

\section{Introduction}
In the last years there was considerable interest in the spectrum of commutative monoids \cite{kato},\cite{b},\cite{deitmar1},\cite{deitmar2},\cite{cc1},\cite{cc2}, \cite{chu}, \cite{cort}. These objects play the same role in the theory of schemes over the 'field with one element' as the spectrum of rings played in the theory of schemes over rings. 
See also \cite{lopez} for more about geometry over the field with one element.

The aim of this short note is to prove a useful lemma on the spectrum of commutative monoids and some of its immediate consequences. Our lemma claims that for any commutative monoid $M$ one has a natural isomorphism 
\begin{equation}\label{sph} Spec(M)\cong \hom(M, \I),\end{equation}
where $\hom$ is taken in the category of commutative monoids and 
$\I=Spec(\mathbb{N})$.
The monoid structure in the spectrum of a monoid, is given by the union of prime ideals. 
From isomorphism (\ref{sph}), one easily deduces the 'reduction isomorphism'
\begin{equation} Spec(M)\cong Spec(M^{sl}),\end{equation}
where $M^{sl}$ is $M$ modulo to the relation $a^2=a$. Thus the study of the spectrum of commutative monoids, reduces to the study of the spectrum of semilattices. Our second main result claims that if $L$ is a semilattice, then there is an injective map 
$$\alpha_L:L\to Spec(L),$$
which is bijective provided $L$ is  finitely generated (hence  finite). In particular these results give an effective way of computing $Spec(M)$ for an arbitrary finitely generated commutative monoid $M$.
We also show that $\alpha$ is ''natural'' in the following sense. If $f:L\to L'$ is a morphism of finitely generated  semilattices, then one has the following commutative diagram
 $$\xymatrix{L'\ar[r]^{f^\dagger}\ar[d]_{\alpha_{L'}}& L\ar[d]^{\alpha_L}\\
 Spec(L')\ar[r]_{f^{-1}}& Spec(L)}$$
 where  $f^\dagger$ denotes the right adjoint of $f$. The case of infinite semilatices is also considered. 
\\

Most of the results in this note were part of my 4th year masters thesis at UCL, under the supervision of Dr. Javier L\'{o}pez Pe\~na, who has introduced me to this subject. But Isomorphism (1), which is new, not only simplifies much, but also gives a new insight in the spectrum of monoids.

 \section{Prime ideals of commutative monoids} 
In what follows all monoids are commutative and they are written multiplicatively.
Let $M$ be a monoid, then a subset
$\frak{a}$ is called an ideal provided for any $a\in \frak{a}$ and
$x\in M$ one has $ax\in \frak{a}$. For an element $a\in M$, we let $(a)$ be the principal ideal $aM$. An ideal $\p$ is called \emph{prime}, provided $\p\not =M$ and the complement $\p^c$ of  $\p$ in $M$ is a submonoid. Thus an ideal 
$\frak{p}$ is prime iff $1\not \in \frak{p}$ and if $xy\in \frak{p}$
then either $x\in \frak{p}$ or $y\in \frak{p}$. We let $Spec(M)$ be the set of all prime ideals of $M$. It is equipped with a topology, where the sets
 $$D(a)=\{\p\in Spec(M)| a\not \in \p\}$$
form a bases of open sets. Here $a$ is an arbitrary element of $M$, see \cite{kato},\cite{deitmar1},\cite{deitmar2}. Observe that if 
$f:M_1\to M_2$ is a homomorphism of monoids then for any prime ideal
$\frak{q}\in Spec(M_2)$ the pre-image $f^{-1}(\frak{q})$ is a prime
ideal of $M_1$, hence any monoid homomorphism $f:M_1\to M_2$ gives rise to a continuous map 
 $$f^*:Spec(M_2)\to Spec(M_1); \ \ \ \q\mapsto f^{-1}(\q).$$
 It is obvious that the union of prime ideals is again a prime ideal. It is also clear that the empty set is a prime ideal, which is the least prime ideal, and the set of noninvertible elements of $M$ is a prime ideal, which is the greatest prime ideal. Thus $Spec(M)$ is a (topological) monoid with respect to union. 
 
Let $\I=\{0,1\}$ be the monoid with the obvious multiplication. We will equip the set $\I$ with the topology, where the subsets $\emptyset, \{1\}, \I$ are all open sets. Let $\mathbf{N}=\{1,t,t^2,\cdots\}$ be the free monoid with one generator (of course it is isomorphic to the additive monoid of natural numbers). Then $Spec(\mathbf{N})= \{\emptyset, (t)\}$. Observe that  $\emptyset$ is an open subset of  $Spec(\mathbf{N})$. Hence $\I\cong Spec(\mathbf{N})$ as topogical monoids.
\begin{Le}\label{spec=hom} For any commutative monoid $M$ one has a natural isomorphism of topological monoids
 $$Spec(M)\cong \hom(M,\I),$$
where the topology on the right hand side is induced by the the product topology on $\prod_{m\in M}\I$.
\end{Le}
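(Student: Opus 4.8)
The plan is to exhibit an explicit bijection between prime ideals and monoid homomorphisms $M\to\I$, and then check it is a homeomorphism and a monoid isomorphism. The key observation is that a prime ideal $\p$ is exactly the data of its complement $\p^c$, which is a submonoid, and such a decomposition of $M$ into a submonoid and its complementary ideal is precisely the information carried by a homomorphism to $\I$. Concretely, I would define $\Phi:\hom(M,\I)\to Spec(M)$ by sending $\varphi$ to $\varphi^{-1}(0)$, and $\Psi:Spec(M)\to\hom(M,\I)$ by sending $\p$ to its characteristic function $\chi_\p$, where $\chi_\p(a)=0$ if $a\in\p$ and $\chi_\p(a)=1$ if $a\notin\p$.

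First I would verify that these maps are well defined. For $\Phi$: given $\varphi:M\to\I$, the set $\varphi^{-1}(0)$ is an ideal (if $\varphi(a)=0$ and $x\in M$, then $\varphi(ax)=\varphi(a)\varphi(x)=0$), it is proper since $\varphi(1)=1$, and it is prime since $\varphi(xy)=0$ forces $\varphi(x)=0$ or $\varphi(y)=0$ as $\I$ has no zero divisors. For $\Psi$: the primeness of $\p$ is exactly what makes $\chi_\p$ multiplicative — $\chi_\p(xy)=\chi_\p(x)\chi_\p(y)$ is the statement that $xy\in\p$ iff $x\in\p$ or $y\in\p$ — and $\chi_\p(1)=1$ follows from $1\notin\p$. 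That $\Phi$ and $\Psi$ are mutually inverse is then immediate from unwinding the definitions. This bijective part is routine and I do not expect it to be the obstacle.

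The part requiring genuine care is matching the two structures carried on each side. For the monoid structure, the operation on $Spec(M)$ is union of prime ideals, so I must check that $\chi_{\p\cup\q}=\chi_\p\cdot\chi_\q$ pointwise, i.e. that $a\in\p\cup\q$ iff $a\in\p$ or $a\in\q$, which is a tautology; hence $\Psi$ is a monoid homomorphism, the neutral element $\emptyset\subset M$ corresponding to the constant map $1$. For the topology, I would show $\Phi$ and $\Psi$ are continuous. The basic open sets $D(a)=\{\p\mid a\notin\p\}$ correspond under $\Psi$ to $\{\varphi\mid\varphi(a)=1\}$, which is exactly the preimage of the open set $\{1\}\subset\I$ under the projection $\mathrm{ev}_a:\prod_{m\in M}\I\to\I$ restricted to $\hom(M,\I)$; since the sets $\{1\}\subset\I$ generate the subbasis of the product topology, this simultaneously shows that the $D(a)$ pull back to subbasic opens and that subbasic opens push forward to the $D(a)$, giving continuity in both directions. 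The main thing to be careful about is the direction of the topology on $\I$ (only $\emptyset,\{1\},\I$ are open, not $\{0\}$), which is precisely what makes $D(a)$ — an "$a\notin\p$" condition — open rather than closed, so the conventions line up.

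Finally I would remark on naturality: given $f:M_1\to M_2$, the induced map $f^*:Spec(M_2)\to Spec(M_1)$ sending $\q\mapsto f^{-1}(\q)$ corresponds under the identifications above to precomposition $\hom(M_2,\I)\to\hom(M_1,\I)$, $\varphi\mapsto\varphi\circ f$, since $(\varphi\circ f)^{-1}(0)=f^{-1}(\varphi^{-1}(0))$; this establishes that the isomorphism is natural in $M$, completing the proof.
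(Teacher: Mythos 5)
Your proposal is correct and follows essentially the same route as the paper: the same pair of mutually inverse maps $\varphi\mapsto\varphi^{-1}(0)$ and $\p\mapsto\chi_\p$ (the paper's $\theta$ and $\p\mapsto f_\p$), with the monoid and topological compatibilities checked on top. The only difference is that you spell out the verifications (primeness of $\varphi^{-1}(0)$, multiplicativity of $\chi_\p$, the matching of $D(a)$ with the subbasic open $\{\varphi\mid\varphi(a)=1\}$, and naturality) which the paper dismisses as easy or trivial to check.
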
 

\begin{proof} Observe that $\{0\}$ is a prime ideal of $\I$ (in fact $Spec(\I)=\{\emptyset, \{0\}\} \cong \I$). Hence for any homomorphism $f:M\to \I$ we have $f^{-1}(0)\in Spec(M)$. In this way we obtain a map $\theta:\hom(M,\I)\to Spec(M)$. The inverse of this map is given as follows, for a prime ideal $\p\in Spec(M)$, we let $f_\p:M\to \I$ be the map given by $$f_\p(x)=\begin{cases} 0,\ {\rm if } \ x\in \p \\ 1, \ {\rm if } \ x\notin \p\end{cases}$$
Now it is easy to check that $f_\p:M\to \I$ is a homomorphism. Hence $\p\mapsto f_\p$ defines a map $Spec(M)\to \hom(M,\I)$ which is clearly a homomorphism and is the inverse of $\theta$. It is also trivial to check that  $\theta$ and its inverse are continuous.
\end{proof}
Since $\hom(-,\I)$ takes colimits to limits we have (compare pp. 5 and 6 in \cite{cort}).

\begin{Co}\label{zg} If $J$ is a poset and  $(M_j)_{j\in J}$ is a direct system of monoids indexed by $J$, then the natural map
$$Spec(colim_{j\in J}\, M_j)\to lim_{j\in J}\, Spec(M_J)$$
is an isomorphism.
\end{Co}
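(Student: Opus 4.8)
The plan is to reduce the statement to the natural isomorphism of Lemma \ref{spec=hom} and then to apply the universal property of the colimit. Write $M=colim_{j\in J}M_j$ with structure maps $\iota_j:M_j\to M$. Because the isomorphism $\theta:\hom(-,\I)\to Spec(-)$ of Lemma \ref{spec=hom} is natural, the maps $\iota_j^*:Spec(M)\to Spec(M_j)$ defining the map in the statement correspond, under $\theta$, to the restriction maps
$$\hom(M,\I)\to \hom(M_j,\I),\qquad f\mapsto f\circ\iota_j.$$
Indeed, $\iota_j^*(\theta(f))=\iota_j^{-1}(f^{-1}(0))=(f\circ\iota_j)^{-1}(0)=\theta(f\circ\iota_j)$, so the relevant naturality squares commute. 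Hence it suffices to show that the canonical map $\hom(colim_j M_j,\I)\to lim_j\hom(M_j,\I)$ is an isomorphism of topological monoids.

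On the level of monoids this is immediate. By the universal property of the colimit, a homomorphism $M\to\I$ is the same datum as a family $(f_j:M_j\to\I)_{j\in J}$ compatible with the transition maps of the system, and such compatible families are by definition the elements of $lim_j\hom(M_j,\I)$. This bijection is a homomorphism of monoids because the monoid structure on both sides is computed pointwise in $\I$. In short, the contravariant hom-functor $\hom(-,\I)$ carries the colimit to the corresponding limit.

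The one point that needs genuine verification is that this algebraic bijection is a \emph{homeomorphism}. Here I would compare the two topologies on the level of subbasic open sets. On the left, $\hom(M,\I)$ carries the subspace topology from $\prod_{m\in M}\I$, whose nontrivial subbasic opens are the evaluation sets $\{g\mid g(m_0)=1\}$ for $m_0\in M$; on the right, $lim_j\hom(M_j,\I)$ sits inside $\prod_j\prod_{x\in M_j}\I$, with subbasic opens $\{(f_k)_k\mid f_j(x_0)=1\}$ for $j\in J$, $x_0\in M_j$. Under the bijection $g\mapsto(g\circ\iota_j)_j$ the latter pulls back to $\{g\mid g(\iota_j(x_0))=1\}$, which is subbasic open on the left, giving continuity of the inverse. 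For the forward direction one uses multiplicativity: every element $m_0\in M$ is a finite product $\iota_{j_1}(x_1)\cdots\iota_{j_n}(x_n)$ of images of elements of the $M_j$, so $g(m_0)=\prod_k (g\circ\iota_{j_k})(x_k)$, and since a product in $\I=\{0,1\}$ equals $1$ iff every factor does, the set $\{g\mid g(m_0)=1\}$ is a finite intersection of subbasic opens on the right. I expect this matching of subbases, routine but requiring the multiplicative property of maps into $\I$, to be the main (and only real) obstacle; once it is in place, continuity in both directions follows and the bijection is the desired homeomorphism, completing the proof.
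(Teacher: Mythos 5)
Your proof is correct and follows essentially the same route as the paper, whose entire argument is the one-line observation that $\hom(-,\I)$ takes colimits to limits, combined with the natural isomorphism of Lemma \ref{spec=hom}. Your elaboration of the naturality squares and of the matching of subbasic open sets (using that $colim_j M_j$ is generated by the images of the $M_j$) simply makes explicit what the paper leaves to the reader.
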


Recall that the category of monoids satisfying the identity $a^2=a$ is equivalent to the category of \emph{join semilattices} \cite{grillet}. By a join semilattice we mean  a poset $L$ with a least element such
that for any two elements $a,b\in L$ there exist the \emph{join} $a\vee b$, which is the least element among the elements $x$ such that $a\leq x$ and $b\leq
x$.  If  $M$ is a monoid such that $m^2=m$ holds for all $m\in M$, then one defines $x\leq y$ if  $xy=y$. It is easily seen that in this way we obtain a join semilattice. Conversely, if  $L$ is a join semilattice we  can consider $L$ as a monoid, with operation $$xy:=x\vee y.$$
The least element of $L$ is the unit in this monoid. It is clear that we have $x^2=x$ for all $x\in L$. 

We let ${\bf SL}$ be  the full subcategory of  monoids $M$ satisfying the identity  $x^2=x$ for  all $x$. For example we have $\I  \in {\bf SL}$. It is well-known (and trivial) that the  inclusion ${\bf SL}\subset {\bf Mon}$ has a left adjoint functor $M\mapsto M^{sl}$, where $M^{sl}$ is the quotient of $M$ by the smallest congruence $\sim$ for which $x\sim x^2$ for any $x\in M$. For example $\I=(\mathbb{N})^{sl}$. For the reader familiar to the tensor product of commutative monoids we also mention the isomorphism $M^{sl}\cong M\otimes \I$. However we will not make use of this fact. As it follows from Lemma \ref{spec=hom} and Corollary \ref{spsp} below, the object $\I$ is a dualizing object in the category of finite monoids satisfying  the identity $m^2=m$, that is for any such $M$ the following map is an isomorphism
$$ev: M\cong \hom(\hom(M,\I),\I),$$
where $(ev(m))(f)=f(m), m\in M, f\in \hom(M,\I).$

\begin{Le}\label{red} ({\bf Reduction lemma}).  For any monoid $M$, the canonical homomorphism $q:M\to M^{sl}$ yields the isomorphism
$$q^{-1}:Spec(M^{sl})\to Spec (M).$$
\end{Le}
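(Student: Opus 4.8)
The plan is to reduce everything to Lemma~\ref{spec=hom} together with the universal property defining $M^{sl}$. By that lemma we have natural isomorphisms of topological monoids $Spec(M)\cong \hom(M,\I)$ and $Spec(M^{sl})\cong \hom(M^{sl},\I)$. Tracing through the explicit isomorphism $\theta$ of Lemma~\ref{spec=hom}, the map $q^{-1}:Spec(M^{sl})\to Spec(M)$ corresponds, under these identifications, to the precomposition map
$$q^*:\hom(M^{sl},\I)\to \hom(M,\I),\qquad g\mapsto g\circ q.$$
Indeed, if $g$ corresponds to the prime $\q=g^{-1}(0)$, then $g\circ q$ corresponds to $(g\circ q)^{-1}(0)=q^{-1}(\q)$. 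Hence it suffices to prove that $q^*$ is an isomorphism of topological monoids.

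First I would establish that $q^*$ is a bijection. Because $\I$ satisfies $x^2=x$ we have $\I\in{\bf SL}$, and $q:M\to M^{sl}$ is the unit of the adjunction between $(-)^{sl}$ and the full inclusion ${\bf SL}\subset{\bf Mon}$. The universal property of this unit says precisely that every homomorphism $M\to\I$ factors uniquely through $q$, which is exactly the statement that $q^*$ is a bijection (fullness of ${\bf SL}$ ensures the target of this factorization is computed as ordinary monoid homomorphisms $M^{sl}\to\I$). One can also see this by hand: for any $f:M\to\I$ one has $f(x^2)=f(x)^2=f(x)$, so $f$ is constant on $\sim$-classes and descends uniquely to $M^{sl}$; this is the only point where $\I\in{\bf SL}$ is used.

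Next I would observe that $q^*$ is a monoid homomorphism, which is immediate since the monoid structure on each $\hom$-set is pointwise multiplication in $\I$ (corresponding to union of prime ideals) and $(g_1g_2)\circ q=(g_1\circ q)(g_2\circ q)$. For the topology, I would use the product description from Lemma~\ref{spec=hom}: precomposition with $q$ is the map $\prod_{n\in M^{sl}}\I\to\prod_{m\in M}\I$ reindexing the coordinate $n$ to all $m$ with $q(m)=n$, which is continuous; since $q$ is surjective this map is injective, and its inverse on the image is continuous because any basic open set constrains only finitely many coordinates, each realized by some $q(m)$.

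The main obstacle, I expect, is not the bijection---which falls out cleanly from the adjunction---but the bookkeeping: verifying that the abstract inverse of $q^*$ really is the geometric map $q^{-1}$ on prime ideals, and checking continuity of the inverse rather than just the map itself. If one prefers to bypass the $\hom(-,\I)$ machinery entirely, there is a direct route: show every prime $\p\subseteq M$ is saturated for $\sim$, using that primeness forces $x\in\p\iff x^2\in\p$, so that $q(\p)$ is a prime ideal of $M^{sl}$ with $q^{-1}(q(\p))=\p$; there the delicate step is controlling the whole congruence generated by $x\sim x^2$, not merely its generating relations.
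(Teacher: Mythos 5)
Your proof is correct and takes essentially the same route as the paper's: both identify $Spec(M)\cong \hom(M,\I)$ and $Spec(M^{sl})\cong \hom(M^{sl},\I)$ via Lemma \ref{spec=hom} and then use the universal property of $q:M\to M^{sl}$ (the adjunction with ${\bf SL}$, which applies because $\I\in{\bf SL}$) to see that precomposition with $q$ is a bijection. The paper's proof is a two-line version of yours, leaving implicit the bookkeeping you spell out (that $\theta$ intertwines $q^*$ with $q^{-1}$, and the continuity of both maps).
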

\begin{proof} By definition of $M^{sl}$ for any $X\in {\bf SL}$ one has $\hom(M,X)=\hom(M^{sl}, X)$. By putting  $X=\I$ the result follows from Lemma \ref{spec=hom}.
\end{proof} 

As a consequence we obtain the following fact which sharpens Lemma 4.2 in \cite{deitmar2}. We will make use of the following description of $M^{sl}$. Thanks to Theorem 1.2 of Chapter III in \cite{grillet}) one has $M^{sl}=M/\sim$ where $a\sim b$ provided there exist natural numbers $m,n\geq 1$ and elements $u,v\in M$ such that
$a^m=ub$ and $b^n=va$.
\begin{Co} Let $B$ a submonoid of $A$ and assume for any element $a\in A$ there exist a natural number $n$ such that $a^n\in B$. Then 
$$Spec(A)\to Spec(B)$$
is an isomorphism.
\end{Co}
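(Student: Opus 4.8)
The plan is to deduce this from the Reduction Lemma (Lemma~\ref{red}) and the description of $M^{sl}$ recalled just above, by proving that the inclusion $i\colon B\hookrightarrow A$ becomes an \emph{isomorphism of monoids} after applying the functor $(-)^{sl}$. The reduction isomorphism $q^{-1}\colon Spec(M^{sl})\to Spec(M)$ is simply the map induced by the unit $q\colon M\to M^{sl}$, hence it is natural in $M$; applying the contravariant functor $Spec$ to the naturality square $q_A\circ i=i^{sl}\circ q_B$ gives a commutative square in which the two vertical arrows are the reduction isomorphisms $q_A^{-1}$ and $q_B^{-1}$ and the two horizontal arrows are the map $Spec(A)\to Spec(B)$ induced by $i$ and the map $Spec(A^{sl})\to Spec(B^{sl})$ induced by $i^{sl}$. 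Consequently it suffices to show that $i^{sl}\colon B^{sl}\to A^{sl}$ is an isomorphism, since by Lemma~\ref{spec=hom} an isomorphism of monoids induces an isomorphism of the corresponding spectra.

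To prove that $i^{sl}$ is bijective I would use the presentation $M^{sl}=M/\!\sim$, where $a\sim b$ iff $a^m=ub$ and $b^n=va$ for some $m,n\geq 1$ and $u,v\in M$. For surjectivity, given $a\in A$ pick $n$ with $a^n\in B$; one checks at once that $a\sim a^n$ in $A$ (via $a^n=1\cdot a^n$ and $a^n=a^{n-1}\cdot a$), so the class of $a$ in $A^{sl}$ coincides with the class of $a^n$ and thus lies in the image of $B^{sl}$. For injectivity, suppose $b,b'\in B$ with $b\sim_A b'$, say $b^m=ub'$ and $b'^n=vb$ with $u,v\in A$. Choosing $p,q\geq 1$ with $u^p,v^q\in B$ and raising the two relations to the powers $p$ and $q$ yields $b^{mp}=(u^p\,b'^{\,p-1})\,b'$ and $b'^{\,nq}=(v^q\,b^{\,q-1})\,b$, where the bracketed factors lie in $B$ because $B$ is a submonoid containing $b,b',u^p,v^q$. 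Hence $b\sim_B b'$, which shows that $i^{sl}$ is injective.

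The only substantial point is the injectivity step, and it is exactly where the hypothesis is needed in full force: the witnesses $u,v$ furnished by the relation $\sim_A$ a priori live only in $A$, and the argument consists in absorbing them into $B$ by passing to powers $u^p,v^q\in B$ while soaking up the spurious idempotent factors $b'^{\,p-1}$ and $b^{\,q-1}$, which cause no harm since $b$ and $b'$ already belong to $B$. I expect this to be the main obstacle; the naturality of the reduction isomorphism, the surjectivity, and the passage from the monoid isomorphism to the isomorphism of spectra are all formal.
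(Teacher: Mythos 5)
Your proposal is correct and follows essentially the same route as the paper: reduce via the Reduction Lemma to showing $B^{sl}\to A^{sl}$ is an isomorphism, prove surjectivity from $a\sim a^n$, and prove injectivity by raising the witnessing relations to powers that land $u,v$ in $B$. Your write-up is in fact slightly more careful than the paper's, since you explicitly absorb the spurious factors $b'^{\,p-1}$ and $b^{\,q-1}$ into the submonoid, a step the paper leaves implicit when it concludes $b_1\sim b_2$ in $B$ from $b_1^{kN}=u_1b_2^N$.
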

\begin{proof} It suffice to show that $B^{sl}\to A^{sl}$ is an isomorphism. Take 
any element $a\in A$, since $a\sim a^n$ for all $n$ we see that the map in question is surjective. Now take two elements $b_1,b_2$ in $B$ and assume $b_1\sim b_2$ in $A$. Then there are $u,v\in A$ such that $b_1^k=ub_2$ and $b_2^m=vb_1$. Take $N$ such that $u_1=u^N\in B$ and $v_1=v^N\in B$. Then $b_1^{kN}=u_1b_2^N$ and $b_2^N=v_1b_1^N$. Thus $b_1\sim b_2$ in $B$ and we are done.
\end{proof}
\section{Spectrum of semilattices} 
By the reduction lemma \ref{red} the study of the spectrum of commutative monoids, reduces to the study of the spectrum of semilattices. Before we go further let us fix some terminology.

\subsection{Adjoint maps of morphisms of semilattices} 
The main result of this section is probably well-known to experts, but I couldn't find any references.

For a poset $P$ we let  $P^{op}$ be  the poset, which is $P$ as a set,
but with the reverse ordering.  We sometimes use the notations $$Max\{x\mid x\in P\} \
\  {\rm and } \ \ \ Min\{x\mid x\in P\} $$
for the greatest and least
elements of $P$. 

Let  $X$ and $Y$ be posets. Assume $f:X\to Y$ and $g:Y\to X$ be
maps. We say that $g$ is a  left adjoint of $f$ and $f$ is a
\emph{right adjoint} of $g$, if
$$\left (y\leq f(x) \right)\, \Leftrightarrow \,
\left (g(y)\leq x)\right), \ \ \ x\in X, \, y\in Y.$$

As a specialization of the well-known facts from the 
category theory we can conclude that if $f$ has a left or right adjoint then it is unique. We sometimes write  $g=f^*$ for the left adjoint of $f$ and $f=g^\dagger$ for the right adjoint of $g$. 

\begin{Pro}\label{ra}  Let $X$ and $Y$ be posets. \begin{enumerate}
\item  A map $f:X\to Y$
has a right adjoint $f^*$ if and only if for any $y\in Y$ the set
$\{x\in X | y\leq f(x)\}$ has the least element.
 If this is the case, then
$$f^*(y)=Min\{x\in X | y\leq f(x)\}.$$
If this is the case, then both $f$ and $g$ are monotonic maps.

\item A map $f:X\to Y$ has a left adjoint $f^\dagger:Y\to X$ if and only if for any
$y\in Y$ the set $\{x\in X | f(x)\leq y\}$ has the greatest element. If this is the case, then
$$f^*(y)=Max\{x\in X | f(x)\leq y\}.$$
If this is the case, then both $f$ and $g$ are monotonic maps.
\end{enumerate}
\end{Pro}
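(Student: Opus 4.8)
The plan is to establish part (1) from scratch and then derive part (2) by dualising. Write $S_y=\{x\in X\mid y\leq f(x)\}$ and recall that the map to be produced is $f^*\colon Y\to X$, characterised by the biconditional $f^*(y)\leq x \Leftrightarrow y\leq f(x)$ for all $x\in X$, $y\in Y$. I would split the asserted equivalence into its two implications.

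For the forward implication, suppose the adjoint $f^*$ already exists. Taking $x=f^*(y)$ in the biconditional and using $f^*(y)\leq f^*(y)$ gives the unit inequality $y\leq f(f^*(y))$, so $f^*(y)\in S_y$; and if $x\in S_y$, i.e. $y\leq f(x)$, then the biconditional yields $f^*(y)\leq x$. Hence $f^*(y)$ is the least element of $S_y$, which both proves that $S_y$ has a least element and verifies the formula $f^*(y)=Min\,S_y$. Monotonicity is then formal: for $x_1\leq x_2$ put $y=f(x_1)$, so $f^*(y)\leq x_1\leq x_2$ and therefore $y\leq f(x_2)$, i.e. $f(x_1)\leq f(x_2)$; dually, for $y_1\leq y_2$ one has $y_1\leq y_2\leq f(f^*(y_2))$, whence $f^*(y_1)\leq f^*(y_2)$.

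For the backward implication, assume every $S_y$ has a least element and \emph{define} $f^*(y)=Min\,S_y$; it remains to verify the biconditional. The implication $y\leq f(x)\Rightarrow f^*(y)\leq x$ is immediate, since then $x\in S_y$ and $f^*(y)$ is its least element. The converse implication $f^*(y)\leq x\Rightarrow y\leq f(x)$ is the crux: one has $y\leq f(f^*(y))$ because $f^*(y)\in S_y$, and to pass from $f^*(y)\leq x$ to $f(f^*(y))\leq f(x)$ one must apply $f$, which requires $f$ to be monotone. I expect this to be the main obstacle, and it is a genuine one: a non-monotone $f$ may have every $S_y$ endowed with a least element yet possess no adjoint. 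I would therefore carry out this direction under the monotonicity of $f$, taken as the standing hypothesis on morphisms of posets, after which the argument closes.

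Finally, part (2) requires no separate computation. Applying part (1) verbatim to the opposite posets $X^{op}$ and $Y^{op}$ reverses every inequality, replaces $Min$ by $Max$, turns the defining condition $y\leq f(x)$ into $f(x)\leq y$, and interchanges the two kinds of adjoint; the resulting statement is precisely (2). I would thus record (2) as an immediate consequence of (1) by duality.
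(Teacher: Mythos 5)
Your proof is correct, but there is no proof in the paper to compare it with: Proposition \ref{ra} is stated without proof, the author having just remarked that such statements are ``a specialization of the well-known facts from the category theory''. Your argument is the standard Galois-connection one and supplies exactly what the paper omits: the forward direction extracts the unit inequality $y\leq f(f^*(y))$ from the defining biconditional and deduces both the formula $f^*(y)=Min\{x\in X\mid y\leq f(x)\}$ and the monotonicity of $f$ and $f^*$; the backward direction defines $f^*$ by that formula and verifies the biconditional; part (2) follows by applying part (1) to the opposite posets.

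Two further remarks. First, your observation that the backward implication genuinely needs $f$ to be monotone is correct, and it exposes a real (if mild) defect of the statement as printed: for $X=\{a<b\}$, $Y=\{0<1\}$ and the non-monotone map $f(a)=1$, $f(b)=0$, each set $\{x\in X\mid y\leq f(x)\}$ has a least element (namely $a$ in both cases), yet $f$ admits no adjoint, since by your own forward argument the existence of an adjoint forces monotonicity. So the ``if'' direction holds only for monotone $f$; this is harmless for the paper, where the proposition is only ever applied to morphisms of semilattices, but the hypothesis should be stated. Second, the statement garbles the paper's own notational conventions: under those conventions $f^*$ denotes a left adjoint and $f^\dagger$ a right adjoint, the words ``left'' and ``right'' in the two parts are interchanged relative to the displayed formulas, part (2) writes $f^*$ where $f^\dagger$ is meant, and the letter $g$ is never introduced. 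Your reading, which takes the formulas and the biconditionals $f^*(y)\leq x\Leftrightarrow y\leq f(x)$ and $x\leq f^\dagger(y)\Leftrightarrow f(x)\leq y$ as definitive, is the intended one and is what the proof of Theorem \ref{mt} actually uses.
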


A \emph{lattice} is a poset $L$ which is simultaneously a join and meet semi-lattice (that is the opposite poset is a join semi-lattice).

\begin{Le}\label{g_ex} If $L$ is a finite join semi-lattice, then $L$ is a lattice.
\end{Le}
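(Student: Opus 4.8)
The plan is to verify directly that any two elements of $L$ admit a greatest lower bound (a meet); since $L$ already has binary joins by hypothesis, this will show that $L^{op}$ is a join semi-lattice, i.e.\ that $L$ is a lattice. Fix $a,b\in L$ and consider the set of common lower bounds
$$S=\{x\in L\mid x\leq a \text{ and } x\leq b\}.$$
First I would observe that $S$ is nonempty, since it contains the least element of $L$, and that $S$ is finite because $L$ is. The key idea is then to form the join of the \emph{whole} set $S$: since $L$ is a join semi-lattice binary joins exist, and by iterating them any finite nonempty subset has a join, so we may set $m=\bigvee_{x\in S}x$.

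The heart of the argument is to check that $m$ is in fact the greatest element of $S$, hence equal to the meet $a\wedge b$. On one hand $m$ is by construction an upper bound of $S$. On the other hand every $x\in S$ satisfies $x\leq a$, so $a$ is an upper bound of $S$; since $m$ is the \emph{least} upper bound of $S$ we conclude $m\leq a$, and likewise $m\leq b$. Therefore $m\in S$, and being an upper bound of $S$ which itself lies in $S$, it is the greatest element of $S$. By definition this greatest common lower bound is $a\wedge b$, so binary meets exist and $L$ is a lattice.

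It is worth noting that this is exactly what Proposition \ref{ra}(2) asks for: the diagonal $\Delta\colon L\to L\times L$, $x\mapsto(x,x)$, has a left adjoint if and only if each set $\{x\mid \Delta(x)\leq (a,b)\}=S$ has a greatest element, and that left adjoint $\Delta^\dagger$ is precisely the meet operation $(a,b)\mapsto a\wedge b$. Phrasing the lemma this way makes transparent its symmetry with the existence of joins (which corresponds dually to $\Delta$ having a right adjoint).

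I expect no serious obstacle here. The only point that genuinely requires care is the verification that the join of the common lower bounds is itself a common lower bound, which rests entirely on the universal (least-upper-bound) property of the join. The finiteness of $L$ enters only to guarantee that the join of the possibly large set $S$ is available; in the infinite case one would instead need $L$ to admit arbitrary (or at least bounded) joins, which is the natural generalisation.
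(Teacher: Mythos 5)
Your proof is correct and takes essentially the same route as the paper's: both consider the set of common lower bounds of $a$ and $b$, observe that it is finite, nonempty (it contains the least element), and that the join of its elements is again a common lower bound by the least-upper-bound property, hence is the meet $a\wedge b$. The concluding remark recasting this as a left adjoint of the diagonal map is a pleasant addition but does not alter the substance of the argument.
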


\begin{proof} First we show that $L$ posses the greatest element. In fact, if $L=\{x_1,\cdots,x_n\}$,
then $x_1\vee \cdots \vee x_n$ is the greatest element. Now for any
$a,b\in L$ we consider the subset
$$Q_{a,b}:=\{x\in L| x\leq a, \ {\rm and} \ x\leq b\}.$$
It is clear that the least element belongs to $Q_{a,b}$, so it is
nonempty. It is also clear that $Q_{a,b}$ is a  join
subsemilattice of $L$. Thus $Q_{a,b}$ has the greatest element
$a\wedge b$ and we are done.
\end{proof}

If $f:L\to L'$ is a morphism of  finite join semilattices then as we
have seen $L$ and $L'$ are lattices, but in general $f$ needs not to
be a morphism of lattices. However we have the following
\begin{Pro}  Let $L$ and $L'$ be lattices. \begin{enumerate}

\item  Let $f:L\to L'$ be a morphism of join semi-lattices.
If $L$ is  finite, then $f$ considered as a morphism of posets has a right adjoint
$f^\dagger:L'\to L$ which is a morphism of meet semi-lattices.

\item Let $f:L\to L'$ be a morphism of meet semi-lattices.
If $L'$ is finite, then $f$ considered as a morphism of posets has a left adjoint $f^*:L'\to L$
which is a morphism of join semi-lattices.
\end{enumerate}
\end{Pro}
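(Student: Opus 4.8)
The plan is to prove (1) in full detail and to obtain (2) for free by the order-reversing duality $P\mapsto P^{op}$. Under this duality a morphism of join semi-lattices $f\colon L\to L'$ corresponds to a morphism of meet semi-lattices $\bar f\colon L^{op}\to L'^{op}$, and, reading off the defining condition $\left(y\leq f(x)\right)\Leftrightarrow\left(g(y)\leq x\right)$ in the opposite posets, right and left adjoints are interchanged. Hence applying (1) to $\bar f$ produces the right adjoint of $\bar f$, which is exactly the left adjoint $f^*$ of $f$, and a meet morphism of $L^{op},L'^{op}$ is a join morphism of $L,L'$. So everything reduces to constructing $f^\dagger$ and checking it is a morphism of meet semi-lattices.

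For the existence of $f^\dagger$ I would invoke Proposition \ref{ra}: it suffices to show that for every $y\in L'$ the set $S_y:=\{x\in L\mid f(x)\leq y\}$ has a greatest element, and then put $f^\dagger(y)=Max\,S_y$. The hypotheses enter exactly here. First, $S_y$ is nonempty: a morphism of join semi-lattices preserves the least element, so $f(0_L)=0_{L'}\leq y$ and $0_L\in S_y$. Second, $S_y$ is closed under $\vee$, since $f(x_1\vee x_2)=f(x_1)\vee f(x_2)\leq y$ whenever $x_1,x_2\in S_y$; thus $S_y$ is a join-subsemilattice of $L$. As $L$ is finite, $S_y$ is a finite nonempty join semi-lattice, so exactly as in the proof of Lemma \ref{g_ex} the element $\bigvee_{x\in S_y}x$ lies in $S_y$ and is its greatest element. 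This defines $f^\dagger$, and by Proposition \ref{ra} it is the right adjoint of $f$.

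It then remains to check that $f^\dagger$ is a morphism of meet semi-lattices, which is the purely formal part, read off from the adjunction $\left(f(x)\leq y\right)\Leftrightarrow\left(x\leq f^\dagger(y)\right)$. For the top element, $f^\dagger(\top_{L'})=Max\{x\mid f(x)\leq\top_{L'}\}=\top_L$, which exists because $L$ is a lattice. For binary meets one chains equivalences: $x\leq f^\dagger(y_1\wedge y_2)$ iff $f(x)\leq y_1\wedge y_2$ iff $f(x)\leq y_1$ and $f(x)\leq y_2$ iff $x\leq f^\dagger(y_1)$ and $x\leq f^\dagger(y_2)$ iff $x\leq f^\dagger(y_1)\wedge f^\dagger(y_2)$. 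Hence $f^\dagger(y_1\wedge y_2)$ and $f^\dagger(y_1)\wedge f^\dagger(y_2)$ have the same principal down-sets and therefore coincide, so $f^\dagger$ preserves meets.

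The one genuine subtlety — and the step I would be most careful about — is the existence part: an adjoint exists precisely when the relevant set attains its extremum, and that is exactly what finiteness buys. What the argument actually uses is the finiteness of the \emph{domain} lattice $L$ (and, after dualizing, the finiteness of the source in (2) as well); the preservation of meets, by contrast, is a formal consequence of the adjunction and needs no finiteness. In the intended application, where $L$ and $L'$ are finitely generated and hence finite by Lemma \ref{g_ex}, all the finiteness hypotheses hold at once and the two statements are strictly dual.
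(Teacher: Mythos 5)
Your proof of part (1) is essentially the paper's own proof: you work with the same set $S_y=\{x\in L\mid f(x)\leq y\}$ (the paper's $Q_{f,y}$), show it is a nonempty join-subsemilattice, use finiteness of $L$ as in Lemma \ref{g_ex} to extract its greatest element, invoke Proposition \ref{ra} to get the adjoint, and prove meet-preservation by the identical chain of equivalences. Your two additions are minor but genuine improvements: you verify that $f^\dagger$ preserves the top element (the paper checks only binary meets, although preservation of the unit, i.e.\ the greatest element, is part of being a morphism of meet semi-lattices viewed as monoids), and you replace the paper's closing sentence ``a similar argument shows the second assertion'' with an explicit order-duality $P\mapsto P^{op}$, which is exactly what ``similar'' means here.

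The substantive point is the one you half-noticed at the end. Dualizing (1) yields a statement about a morphism of meet semi-lattices $f:L\to L'$ whose \emph{domain} $L$ is finite, whereas part (2) as printed assumes $L'$ finite. This is not a defect of your argument but of the statement: with the printed hypothesis, (2) is false. Take $L=[0,1]$ (a bounded lattice), $L'=\{0<1\}$, and $f(x)=1$ for $x>0$, $f(0)=0$; this is a morphism of meet semi-lattices and $L'$ is finite, yet $\{x\in L\mid 1\leq f(x)\}=(0,1]$ has no least element, so no left adjoint exists. The paper's implicit ``similar argument'' has the same requirement, since it needs the subsets $\{x\in L\mid y\leq f(x)\}$ of $L$ to be finite. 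So your proof, like the paper's, actually establishes (2) with ``$L$ finite'' in place of ``$L'$ finite''; you correctly observed that what is used is finiteness of the source, but you should state plainly that the printed hypothesis must be corrected, rather than only remarking that in the intended application (finitely generated, hence finite, semilattices) both finiteness conditions hold simultaneously.
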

\begin{proof} Take an element  $y\in L'$ and consider  the set
$$Q_{f,y}=\{x\in L| fx\leq y\}$$
We claim that  $Q_{f,y}$ is a join subsemilattice of $L$. First, the
least element of $L$ belongs to $Q_{f,y}$ and next,  if $x_1,x_2\in
Q_{f,y}$, then $f(x_i)\leq y$ for $i=1,2$. Hence
$$f(x_1\vee x_2)=f(x_1)\vee f(x_2)\leq y$$
and $x_1\vee x_2\in Q_{f,y}$. By Lemma \ref{g_ex} $Q_{f,y}$ has  the
greatest  element.  Hence we can use Proposition \ref{ra} to deduce
existence of $f^\dagger$. Take now $y_1,y_2\in Y$. Then we have
$\left( fx\leq y_1\wedge y_2\right ) \,\Leftrightarrow \, \left(
f(x)\leq y_1 \ {\rm and} \ f(x)\leq y_2\right )\,
 \Leftrightarrow \, \left(x\leq f^\dagger (y_1) \ {\rm and }\
 x\leq f^\dagger(y_2)\right ) \Leftrightarrow \, \left(x\leq f^\dagger (y_1)\wedge
  f^\dagger (y_2)\right )$. Hence
  $$f^\dagger(y_1\wedge y_2)=f^\dagger(y_1)\wedge f^\dagger(y_2)$$
  and the first assertion is proved. A similar argument shows the second assertion.

\end{proof}

\begin{Co} The category of finite join semi-lattices is contravariantly
equivalent to the category of finite meet semi-lattices.
\end{Co}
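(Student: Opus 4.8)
The plan is to exhibit a pair of mutually inverse contravariant functors between the two categories, both built from the adjoint construction of the preceding Proposition. By Lemma \ref{g_ex} every finite join semi-lattice is in fact a lattice, and dually (applying Lemma \ref{g_ex} to the opposite poset) every finite meet semi-lattice is a lattice; hence both categories have the finite lattices as their objects and differ only in their chosen operation on morphisms. I would therefore define a functor $F$ from finite join semi-lattices to finite meet semi-lattices by setting $F(L)=L$ on objects (the same lattice, now regarded through its meet structure) and $F(f)=f^\dagger$ on a join-morphism $f:L\to L'$, where $f^\dagger:L'\to L$ is the adjoint furnished by part (1) of the Proposition and is a meet-morphism. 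Symmetrically, I would define $G$ from finite meet semi-lattices to finite join semi-lattices by $G(A)=A$ and $G(g)=g^*$, using part (2) of the Proposition.

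Next I would verify that $F$ and $G$ are genuinely contravariant functors. Since poset adjoints are unique (as recorded after the definition of adjoint), it suffices to check the characterizing biconditional for each claimed equality. For composable join-morphisms $f:L\to L'$ and $g:L'\to L''$ one has $gf(x)\leq z \Leftrightarrow f(x)\leq g^\dagger(z)\Leftrightarrow x\leq f^\dagger(g^\dagger(z))$, so $f^\dagger g^\dagger$ satisfies the property characterizing $(gf)^\dagger$ and hence equals it by uniqueness; the identical computation on the opposite poset handles $G$. That identities go to identities is immediate, since $\id(x)\leq y\Leftrightarrow x\leq y$ forces $\id^\dagger=\id$. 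This establishes contravariant functoriality of both $F$ and $G$.

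Finally I would show that $G\circ F=\id$ and $F\circ G=\id$. On objects this is clear, as both functors fix the underlying lattice. On a join-morphism $f$ I must check $(f^\dagger)^*=f$: the relation defining $f^\dagger$, namely $f(x)\leq y\Leftrightarrow x\leq f^\dagger(y)$, is exactly the relation exhibiting $f$ as the left adjoint of $f^\dagger$, so uniqueness of adjoints yields $(f^\dagger)^*=f$; the dual computation gives $(g^*)^\dagger=g$ for a meet-morphism $g$. Consequently $F$ and $G$ are mutually inverse, and in particular the two categories are contravariantly equivalent, as claimed.

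I do not anticipate a serious obstacle here: every step reduces to the uniqueness of poset adjoints together with the preceding Proposition, which both produces the adjoints and certifies that they preserve the relevant semi-lattice operations. The only points deserving genuine care are the bookkeeping of which half of the adjunction is being invoked (so that $F$ draws on part (1) and $G$ on part (2) of the Proposition) and the observation, via Lemma \ref{g_ex} applied to opposite posets, that the object classes of the two categories really do coincide.
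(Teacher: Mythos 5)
Your proof is correct and takes essentially the same approach as the paper: both use Lemma \ref{g_ex} to identify the objects of the two categories with finite lattices, and both take the identity-on-objects contravariant functors sending a join-morphism $f$ to $f^\dagger$ and a meet-morphism $g$ to $g^*$. The only difference is that you spell out the functoriality and mutual-inverse checks via uniqueness of poset adjoints, which the paper states without verification.
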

\begin{proof} By Lemma \ref{g_ex} we can assume that both categories in
question have finite lattices as objects.  The duality functor 
from the category of finite join semi-lattices to the category of
finite meet semi-lattices is the identity on objects and sends a
join semi-lattice morphism $f$ into $f^\dagger$. The functor in the
opposite direction is also identity on objects and sends a meet
semi-lattice morphism $g$ into $g^*$. 
\end{proof}

\subsection{Spec of finite semilattices}

Lemma \ref{red} reduces the study of $Spec(M)$ to the case when $M$ is a join semilattice. Our aim is now to prove that for finite (=finitely generated) semilattice $L$ there exist a ''natural'' homeomorphism $\alpha_L:L\to Spec(L)$, where $L$ is equipped with a topology, where open sets are exactly ideals of $L$.

Let $L$ be a join semilattice considered as a monoid. 
Take an element  $a\in L$ and  consider the set
 $$Q_a=\{x\in L| x\leq a\}.$$
 Let $\alpha(a)$ be the complement of $Q_a$ in $L$.
 Then we have
 
 \begin{Le} \begin{enumerate}
 \item $Q_a$ is a subsemilattice.

 \item $\left (a\leq b\right )\Longleftrightarrow \left ( Q_a\subset Q_b \right)$

 \item $\alpha(a)\in Spec(L)$

 \item $\alpha_L: L\to Spec(L)$ is an injective map.
 
 \item If $L$ is lattice, then $\alpha_L(a\wedge b)=\alpha_L(a)\cup \alpha_L(b)$.
 
 \end{enumerate}

\end{Le}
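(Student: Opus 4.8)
The plan is to exploit the fact that the five assertions form a chain: the order-theoretic properties of $Q_a$ established in (1)--(2) feed directly into the monoid-theoretic statements about its complement $\alpha(a)=L\setminus Q_a$ in (3)--(5). Throughout I would keep in mind the dictionary supplied in Section 2, namely that the monoid operation on $L$ is the join $\vee$, the unit is the least element, and an ideal is prime precisely when its complement is a submonoid.

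For (1) I would note that the least element of $L$ lies in $Q_a$, so $Q_a$ contains the unit, and that if $x,y\leq a$ then $a$ is an upper bound of $\{x,y\}$, whence $x\vee y\leq a$ by the defining minimality of the join; thus $Q_a$ is closed under the monoid operation and is a subsemilattice. For (2), the direction $a\leq b\Rightarrow Q_a\subset Q_b$ is immediate from transitivity of $\leq$, while the converse follows by testing the inclusion on the element $a$ itself, which lies in $Q_a$ and hence in $Q_b$, giving $a\leq b$.

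Assertion (3) is the one genuine verification, and I expect the absorption property to be the only place that requires care --- note that the complement of a submonoid is in general \emph{not} an ideal, so this step must actually use the order structure. Concretely, $\alpha(a)$ is proper because $Q_a$ is nonempty (it contains the unit), and it is an ideal because if $x\not\leq a$ then $x\vee y\not\leq a$ for every $y$ (otherwise $x\leq x\vee y\leq a$), so $xy=x\vee y\in\alpha(a)$. Since the complement of $\alpha(a)$ is exactly $Q_a$, which is a submonoid by (1), the characterisation of prime ideals recalled in Section 2 shows $\alpha(a)\in Spec(L)$.

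Finally, (4) follows from (2): if $\alpha(a)=\alpha(b)$ then $Q_a=Q_b$, so $a\leq b$ and $b\leq a$, hence $a=b$. For (5) I would reduce to the set-theoretic identity $Q_{a\wedge b}=Q_a\cap Q_b$, which holds because $x\leq a\wedge b$ is equivalent to $x\leq a$ and $x\leq b$ by the defining property of the meet in a lattice; passing to complements and using that complementation turns intersection into union then yields $\alpha(a\wedge b)=\alpha(a)\cup\alpha(b)$.
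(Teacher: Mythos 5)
Your proof is correct and complete, and each of the five verifications is the intended one. The paper in fact states this lemma without any proof, treating all five parts as routine checks; your argument supplies exactly those checks, with the one point deserving care (that $\alpha(a)=L\setminus Q_a$ is not merely the complement of a submonoid but genuinely an ideal, via $x\not\leq a\Rightarrow x\vee y\not\leq a$) handled correctly.
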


 \begin{Th}\label{mt}  Let $L$ be  a finite semilattice. \begin{enumerate}
 \item   For any prime ideal $\frak{p}$ we
 let $\beta_L(\frak{p})$ be the greatest element of the subsemilattice $L-\frak{p}$, then for the map $\beta_L:Spec(L)\to L$ one has  $\beta_L\circ \alpha_L=id$. Thus $$\alpha_L:L\to Spec(L)$$
 is bijective. 
 
 \item For  a subset $S\subset L$, the set $\alpha_L(S)$ is an open subset of $Spec(L)$ if and only if $S$ is an ideal.
 
 \item Let  $f:L\to L'$ be  a morphism of finite join semilattices then one has the following commutative
 diagram
 $$\xymatrix{L'\ar[r]^{f^\dagger}\ar[d]_{\alpha_{L'}}& L\ar[d]^{\alpha_L}\\
 Spec(L')\ar[r]_{f^{-1}}& Spec(L)}$$
 where as usual $f^\dagger$ denotes the right adjoint of $f$.
  \end{enumerate}
 \end{Th}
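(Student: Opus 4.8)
The plan is to handle the three parts in turn: part (1) exhibits a two-sided inverse for $\alpha_L$, part (2) matches the two topologies, and part (3) is a direct computation resting on the adjunction. Throughout I will use that, viewing $L$ as a monoid under $\vee$, an ideal is exactly an up-set (if $a\in\p$ and $b\ge a$ then $b=a\vee b=ab\in\p$), so the complement of any ideal is a down-set.

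For part (1) I would first check that $\beta_L$ is well defined. If $\p$ is prime then $L-\p$ is a submonoid, hence a join subsemilattice containing the least element; since $L$ is finite this set is nonempty and closed under $\vee$, so $\beta_L(\p)=\bigvee_{x\notin\p}x$ is its greatest element and lies in $L-\p$. Now $L-\alpha_L(a)=Q_a=\{x\mid x\le a\}$, whose greatest element is plainly $a$, giving $\beta_L\circ\alpha_L=\id$. For bijectivity I would also verify the reverse composite $\alpha_L\circ\beta_L=\id_{Spec(L)}$: writing $a=\beta_L(\p)$, every $x\le a$ lies in the down-set $L-\p$, so $Q_a\subseteq L-\p$, while every element of $L-\p$ is $\le a$ by maximality, so $L-\p\subseteq Q_a$; hence $L-\p=Q_a$ and $\p=\alpha_L(a)$. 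This (consistently with the injectivity of the preceding Lemma) shows $\alpha_L$ is bijective with inverse $\beta_L$.

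For part (2) the computation I would lean on is $\alpha_L(a)\in D(b)\iff b\notin\alpha_L(a)\iff b\in Q_a\iff b\le a$. Transporting through the bijection $\alpha_L$, this identifies $\alpha_L^{-1}(D(b))$ with the principal ideal $(b)=bL=\{x\in L\mid b\le x\}$, so the preimage of each basic open is an ideal, i.e.\ open in $L$, and $\alpha_L$ is continuous. Conversely the same equivalence gives $\alpha_L((b))=D(b)$, whence for any ideal $S=\bigcup_{b\in S}(b)$ one gets $\alpha_L(S)=\bigcup_{b\in S}D(b)$, an open set. Finally, if $\alpha_L(S)$ is open then $S=\alpha_L^{-1}(\alpha_L(S))$ (bijectivity) is the preimage of an open set under a continuous map, hence an ideal. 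This yields the stated equivalence and, together with part (1), shows $\alpha_L$ is a homeomorphism.

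For part (3) commutativity reduces to the pointwise identity $\alpha_L(f^\dagger(y))=f^{-1}(\alpha_{L'}(y))$ for $y\in L'$. Expanding both sides as subsets of $L$, the left side is $\{x\in L\mid x\not\le f^\dagger(y)\}$, while $f^{-1}(\alpha_{L'}(y))=\{x\in L\mid f(x)\in\alpha_{L'}(y)\}=\{x\in L\mid f(x)\not\le y\}$; these coincide exactly because $f^\dagger$ is the right adjoint of $f$, i.e.\ $x\le f^\dagger(y)\iff f(x)\le y$. The genuine content here is light, so I expect no deep obstacle; the points needing care are instead bookkeeping ones. In part (1) finiteness must be used precisely once — to guarantee the submonoid $L-\p$ has a top element — and one must keep straight that ideals are up-sets whereas the $Q_a$ are down-sets. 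In part (3) the only real pitfall is mismatching the direction of the adjunction, so I would regard getting the equivalence $x\le f^\dagger(y)\iff f(x)\le y$ oriented correctly as the main thing to pin down; once invoked it makes the diagram commute on the nose.
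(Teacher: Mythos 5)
Your proof is correct and follows essentially the same route as the paper: $\beta_L$ as the inverse of $\alpha_L$, the identification $\alpha_L^{-1}(D(b))=(b)$ for part (2), and the adjunction $x\leq f^\dagger(y)\Leftrightarrow f(x)\leq y$ for part (3). If anything, you are slightly more thorough than the paper, which only computes $\beta_L\circ\alpha_L=\mathrm{id}$ and asserts bijectivity, whereas you explicitly verify $\alpha_L\circ\beta_L=\mathrm{id}$ (and you check the square in part (3) directly as an equality of prime ideals rather than after composing with $\beta_L$).
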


\begin{proof} Take $a\in L$. Then we have
$$\beta_L\circ \alpha_L(a)=\beta_L(L-Q_a)=Max\{x|x\in Q_a\}=a$$
and the first assertion follows. We also have

$$D(a)=\{ \p\in Spec(L)|a\not \in \p\}
=\{\alpha_L(m)|a\not \in \alpha_L(m)\}=$$
$$ \{\alpha_L(m)|a \in Q_m\}=\{\alpha_L(m)|m\in(a)\}$$
Thus $\alpha^{-1}(D(a))=(a)$ and the second assertion also follows.

To see that the diagram commutes it suffice to prove that
$$\beta_L\circ f^{-1}\circ \alpha_{L'}=f^\dagger$$
Take $y\in L'$. Then
$\beta_L\circ f^{-1}\circ \alpha_{L'}(y)=\beta_Lf^{-1}(L'-Q_y)=\beta_L(L-f^{-1}(Q_y))=
Max\{x\in L|f(x)\in Q_y\}=Max\{x\in L| f(x)\leq y\}=f^\dagger(y)$
and we are done.
\end{proof}

\begin{Co}\label{spsp} \begin{enumerate}
\item If $L$ is a finite semi-lattice then the composite
$$L\xto{\alpha_L} Spec(L) \xto{\alpha_{Spec(L)}} Spec(Spec(L))$$
is an isomorphism of semi-lattices, which is functorial in $L$.
\item Any finite semi-lattice is the spectrum of a monoid.
\end{enumerate}
\end{Co}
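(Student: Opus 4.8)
The plan is to prove (1) by identifying the double composite $\alpha_{Spec(L)}\circ\alpha_L$ with the canonical evaluation map, after which (2) is immediate. Throughout I use Lemma \ref{spec=hom} to identify $Spec(L)\cong\hom(L,\I)$ naturally, so that $Spec(Spec(L))\cong\hom(\hom(L,\I),\I)$ naturally as well. Under this identification I must first understand $\alpha$ in terms of characters: reading off the definition of $f_\p$ in the proof of Lemma \ref{spec=hom}, for a finite semilattice $S$ and $s\in S$ the prime ideal $\alpha_S(s)=S-Q_s$ corresponds to the homomorphism $\chi_s\colon S\to\I$ with $\chi_s(t)=1$ exactly when $t\le s$.

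Next I would record two elementary facts about characters. First, any homomorphism $g\colon L\to\I$ satisfies the key implication: if $x\le a$ then $g(a)=g(x\vee a)=g(x)g(a)$, so $g(a)=1$ forces $g(x)=1$. Second, the semilattice order on $Spec(L)\cong\hom(L,\I)$, where the operation is pointwise multiplication, is given by $g\le h$ iff $g(x)=1$ whenever $h(x)=1$. The heart of the argument is then the computation that $\alpha_{Spec(L)}\circ\alpha_L=ev_L$, where $ev_L(a)(g)=g(a)$. Indeed $\alpha_L(a)=\chi_a$, and applying $\alpha_{Spec(L)}$ produces the character sending $g$ to $1$ precisely when $g\le\chi_a$, i.e. when $g(x)=1$ for all $x\le a$. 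Taking $x=a$, and conversely using the key implication, this condition is equivalent to $g(a)=1$; hence the resulting character is exactly $ev_L(a)$.

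Granting $\alpha_{Spec(L)}\circ\alpha_L=ev_L$, assertion (1) follows at once. By Theorem \ref{mt}(1), applied to $L$ and to the finite semilattice $Spec(L)$, both $\alpha_L$ and $\alpha_{Spec(L)}$ are bijections, so $ev_L$ is a bijection; and $ev_L$ is a homomorphism of semilattices since $ev_L(a\vee b)(g)=g(a)g(b)=(ev_L(a)\cdot ev_L(b))(g)$. Thus $ev_L$ is an isomorphism of semilattices. Functoriality is then automatic, because the evaluation map into $\hom(\hom(-,\I),\I)$ is a natural transformation from the identity functor, and Lemma \ref{spec=hom} identifies this double-$\hom$ functor with $Spec\circ Spec$ naturally. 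Here I deliberately avoid invoking the dualizing property of $\I$, since that property is itself deduced from this corollary. Finally, (2) is immediate: part (1) gives an isomorphism $L\cong Spec(Spec(L))$, so $L$ is the spectrum of the monoid $M=Spec(L)$.

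I expect the main obstacle to be the bookkeeping in the central computation, especially getting the direction of the order on $Spec(L)\cong\hom(L,\I)$ correct and using the key implication to collapse the condition ``$g(x)=1$ for all $x\le a$'' to ``$g(a)=1$''. A more purely order-theoretic alternative would observe that $\alpha_L$ is an order-reversing bijection sending $\wedge$ to $\cup$ (the previous Lemma together with Theorem \ref{mt}(1)), hence a lattice anti-isomorphism, so that a composite of two such maps is a genuine lattice isomorphism and in particular a join-semilattice isomorphism; but this route makes functoriality more delicate, since $\alpha$ is natural only with respect to the adjoints $f^\dagger$ of Theorem \ref{mt}(3), whereas the evaluation description yields naturality in $f$ directly.
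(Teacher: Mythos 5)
Your proof is correct, but it takes a genuinely different route from the paper's. The paper's own proof is a one-liner: since $\alpha_L$ is an order-reversing bijection (part (2) of the lemma on $Q_a$ together with Theorem \ref{mt}(1)), the composite of the two anti-isomorphisms $\alpha_L$ and $\alpha_{Spec(L)}$ is an order-preserving bijection, hence an isomorphism of semilattices; this is precisely the ``purely order-theoretic alternative'' you sketch in your final paragraph and set aside. What you do instead is transport everything through Lemma \ref{spec=hom}, check that $\alpha_S(s)$ corresponds to the character $\chi_s$, and then verify the identity $\alpha_{Spec(L)}\circ\alpha_L=ev_L$, where $ev_L:L\to \hom(\hom(L,\I),\I)$ is evaluation; bijectivity then comes from Theorem \ref{mt}(1) applied twice, and your order computation ($g\le\chi_a$ iff $g(a)=1$) is accurate. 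Your route is longer, but it buys two things the paper's terse proof does not explicitly deliver: first, an actual argument for functoriality --- the paper's proof never addresses it, and as you correctly observe, $\alpha$ itself is natural only with respect to the adjoints $f^\dagger$ of Theorem \ref{mt}(3), so the order-theoretic argument needs an extra step to get naturality in $f$, whereas naturality of the double-dual evaluation map is automatic; second, a self-contained proof of the dualizing-object statement $ev:M\cong\hom(\hom(M,\I),\I)$, which the paper asserts before the corollary as one of its consequences, and your explicit remark avoiding circularity there is well placed. The paper's approach, in exchange, is shorter and requires no bookkeeping with characters.
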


\begin{proof} The second part is a direct consequence of the first one. Since the bijection $\alpha_L$ reverses the ordering the first part follows.
\end{proof}

Combine Theorem \ref{mt} and Theorem \ref{red} to obtain the following result.
\begin{Co} \label{shedegi} \begin{enumerate}
\item  For any monoid $M$ there is an injective map
$$\alpha_M:M^{sl}\to Spec(M)$$
which is bijective if $M$ is a finitely generated monoid.
\item Let  $M$ be  a finitely generated monoid  and let $f:M\to N$ be  a morphism of  monoids. Then one has the following commutative diagram
$$\xymatrix{N^{sl}\ar[r]^{f^\dagger}\ar[d]_{\alpha_{N}}& M^{sl}\ar[d]^{\alpha_M}\\ 
Spec(N)\ar[r]_{f^{*}}& Spec(M)}$$
where as usual $f^\dagger$ is the right adjoint to the poset map $f_*:M^{sl}\to N^{sl}$.
\item For any finitely generated monoid $M$ one has the natural bijection
$$Spec^3(M)\cong Spec(M),$$
where $Spec^3$ is the three fold composite of $Spec$.
\end{enumerate}

\end{Co}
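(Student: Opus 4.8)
The plan is to reduce all three assertions to the semilattice case via the Reduction Lemma \ref{red} and then feed them through Theorem \ref{mt} and Corollary \ref{spsp}. Throughout I write $q\colon M\to M^{sl}$ for the canonical map, so that $q^{-1}\colon Spec(M^{sl})\to Spec(M)$ is the isomorphism of Lemma \ref{red}. For the first assertion I would simply \emph{define} $\alpha_M$ to be the composite
$$\alpha_M\colon M^{sl}\xto{\alpha_{M^{sl}}} Spec(M^{sl})\xto{q^{-1}} Spec(M).$$
The preceding Lemma makes $\alpha_{M^{sl}}$ injective and $q^{-1}$ is a bijection, so $\alpha_M$ is injective. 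If $M$ is finitely generated then $M^{sl}$ is finitely generated, hence finite, so Theorem \ref{mt}(1) makes $\alpha_{M^{sl}}$ bijective and therefore $\alpha_M$ bijective.

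For the second assertion I would paste together two commutative squares. Functoriality of $(-)^{sl}$ gives $f_*\colon M^{sl}\to N^{sl}$ with $q_N\circ f=f_*\circ q_M$; applying the contravariant functor $Spec$ (and using that $Spec(q)=q^{-1}$ by Lemma \ref{red}) yields the naturality square
$$f^{*}\circ q_N^{-1}=q_M^{-1}\circ (f_*)^{*}.$$
On the other hand, since $M^{sl}$ is finite, the computation carried out in the proof of Theorem \ref{mt}(3) applies to $g=f_*$ and gives
$$\alpha_{M^{sl}}\circ (f_*)^{\dagger}=(f_*)^{*}\circ\alpha_{N^{sl}}.$$
Composing these, $\alpha_M\circ f^{\dagger}=q_M^{-1}\alpha_{M^{sl}}(f_*)^{\dagger}=q_M^{-1}(f_*)^{*}\alpha_{N^{sl}}=f^{*}q_N^{-1}\alpha_{N^{sl}}=f^{*}\circ\alpha_N$, which is exactly the commutativity of the stated diagram.

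For the third assertion I would apply Corollary \ref{spsp}(1) directly to the semilattice $Spec(M)$. Under union $Spec(M)$ satisfies $x^2=x$, so it is a join semilattice, and by the first assertion it is in bijection with the finite set $M^{sl}$, hence finite. Corollary \ref{spsp}(1), applied to $L=Spec(M)$, then produces an isomorphism $Spec(M)\cong Spec^2(Spec(M))=Spec^3(M)$, which is the claim.

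I expect the only delicate point to be the second assertion. One must keep the variances straight: the reduction map contributes $q^{-1}=Spec(q)$, while $f$ contributes $f^{\dagger}$ on semilattices and $f^{*}$ on spectra, and these run in opposite directions, so the two squares have to be stacked in the correct orientation. The genuinely important subtlety is that Theorem \ref{mt}(3) must be invoked even though $N^{sl}$ need not be finite, since $N$ is not assumed finitely generated; this is legitimate because the computation in its proof uses finiteness only of the \emph{source} $M^{sl}$ (through $\beta_{M^{sl}}$ and through the existence of the right adjoint $f^{\dagger}$), while $\alpha_{N^{sl}}(y)=N^{sl}-Q_y$ is a prime ideal for any join semilattice. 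Once these two observations are in place the rest is a routine diagram chase, and assertions (1) and (3) are immediate from the cited results.
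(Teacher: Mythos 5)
Your proposal is correct and follows essentially the same route as the paper, which gives no detailed argument but simply says to combine the Reduction Lemma \ref{red} with Theorem \ref{mt} (and, for the third part, Corollary \ref{spsp}). Your explicit observation that Theorem \ref{mt}(3) must be applied with only the \emph{source} semilattice $M^{sl}$ finite --- legitimate because its proof and the existence of the right adjoint only use finiteness of the source --- is a needed refinement that the paper leaves implicit, since $N$ is not assumed finitely generated.
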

\subsection{Infinite semilatices and not-finitely generated monoids} Asumume $L$ is a semilatice. We let $L_\lambda$ denote the collection of all finitely generated subsemilattices of $L$. Observe that each $L_\lambda$ is finite and they form a filtered system with respect to inclusion. If $f_{\lambda,\eta}:L_\lambda\subset L_\eta$ is an inclusion of finite subsemilattices, we have induced surjective morphism of dual meet-semilatices
$$f_{\lambda,\eta}^\dagger:L_\eta\to L_\lambda$$
In this way one obtains the inverse system of finite meet-semilattices. Let $L_\infty$ be the inverse limits of this system
\begin{Pro} For any $L$ there is a bijection
$$L_\infty\to Spec(L)$$
\end{Pro}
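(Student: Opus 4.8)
The plan is to realize $L$ as a filtered colimit of the finite semilattices $L_\lambda$ and then combine Corollary \ref{zg} with the functoriality of $\alpha$ recorded in Theorem \ref{mt}. First I would observe that $L=colim_\lambda\, L_\lambda$ in the category of commutative monoids. Indeed, every element of $L$ lies in a finitely generated (hence finite) subsemilattice, for instance the one it generates, and the collection of all such subsemilattices is filtered under inclusion; since the forgetful functor to sets creates filtered colimits and a filtered colimit of monoids is the corresponding directed union, the canonical map $colim_\lambda\, L_\lambda\to L$ is an isomorphism. The structure maps of this colimit are exactly the inclusions $f_{\lambda,\eta}\colon L_\lambda\hookrightarrow L_\eta$.

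Next I would apply Corollary \ref{zg} to this filtered system, concluding that the natural map
$$Spec(L)=Spec(colim_\lambda\, L_\lambda)\to lim_\lambda\, Spec(L_\lambda)$$
is an isomorphism, where the transition maps of the inverse system on the right are the induced maps $(f_{\lambda,\eta})^{-1}\colon Spec(L_\eta)\to Spec(L_\lambda)$.

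The key step is to identify this inverse system with the one defining $L_\infty$. Each $L_\lambda$ is finite, so Theorem \ref{mt} supplies bijections $\alpha_{L_\lambda}\colon L_\lambda\to Spec(L_\lambda)$, and part (3) of that theorem, applied to the inclusion $f_{\lambda,\eta}$, yields the commutative square
$$\xymatrix{L_\eta\ar[r]^{f_{\lambda,\eta}^\dagger}\ar[d]_{\alpha_{L_\eta}}& L_\lambda\ar[d]^{\alpha_{L_\lambda}}\\
Spec(L_\eta)\ar[r]_{(f_{\lambda,\eta})^{-1}}& Spec(L_\lambda).}$$
Thus the family $(\alpha_{L_\lambda})_\lambda$ is an isomorphism of inverse systems from $(L_\lambda, f_{\lambda,\eta}^\dagger)$, which is precisely the system whose limit is $L_\infty$, to $(Spec(L_\lambda),(f_{\lambda,\eta})^{-1})$. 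Passing to inverse limits then gives the desired bijection
$$L_\infty=lim_\lambda\, L_\lambda\xto{\cong} lim_\lambda\, Spec(L_\lambda)\cong Spec(L).$$

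The point requiring care is not a deep obstacle but a matching of variances: one must check that the transition maps $f_{\lambda,\eta}^\dagger$ used to build $L_\infty$ are exactly those for which the $\alpha_{L_\lambda}$ are compatible with the maps $(f_{\lambda,\eta})^{-1}$, and that the index directions agree (both run from $\eta$ down to $\lambda$ whenever $\lambda\le\eta$). Granting Theorem \ref{mt}(3) this compatibility is automatic, so the genuine content of the argument is only the identification $L=colim_\lambda\, L_\lambda$ and the invocation of Corollary \ref{zg}.
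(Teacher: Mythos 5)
Your proof is correct and follows exactly the paper's route: the paper's own proof is the one-line observation that $L=colim\, L_\lambda$ together with Corollary \ref{zg}. You have merely made explicit the step the paper leaves implicit, namely that the bijections $\alpha_{L_\lambda}$ of Theorem \ref{mt} together with part (3) of that theorem identify the inverse system $(L_\lambda, f_{\lambda,\eta}^\dagger)$ defining $L_\infty$ with the system $(Spec(L_\lambda),(f_{\lambda,\eta})^{-1})$.
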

\begin{proof} Since $L=colim \, L_\lambda$ the result follows from Corollary \ref{zg}.
\end{proof} 

In this way we obtain a  contravariant  functor $L\mapsto L_\infty$ from the category of semilattices to the category of profine semilattices. 
For $L=M^{sl}$ we write  $M_\infty^{sl}$ for $L_\infty$. Thus $M\to M_\infty^{sl}$ is a contavariant functor from monoids to the category of
profinite semilatices. Combining the previous results we obtain a bijection
$$Spec(M)\to M^{sl}_\infty$$
for any commutative monoid $M$.

\end{document}